\documentclass[12pt,oneside,english]{amsart}
\usepackage[T1]{fontenc}
\usepackage[latin9]{inputenc}
\usepackage{mathrsfs}
\usepackage{amstext}
\usepackage{amsthm}
\usepackage{amssymb}

\makeatletter
\numberwithin{equation}{section}
\numberwithin{figure}{section}
\theoremstyle{plain}
\newtheorem{thm}{\protect\theoremname}
\theoremstyle{plain}
\newtheorem{prop}[thm]{\protect\propositionname}
\theoremstyle{plain}
\newtheorem{lem}[thm]{\protect\lemmaname}
\theoremstyle{remark}
\newtheorem{rem}[thm]{\protect\remarkname}

\makeatother

\usepackage{babel}
\providecommand{\lemmaname}{Lemma}
\providecommand{\propositionname}{Proposition}
\providecommand{\remarkname}{Remark}
\providecommand{\theoremname}{Theorem}

\begin{document}
\title{On Franke's theorem in the simplest case}
\author{Devadatta G. Hegde}
\begin{abstract}
For level one spherical automorphic forms on the upper half-plane,
we prove directly that every automorphic form is a sum of a cusp form
and a linear combination of Laurent coefficients of the standard Eisenstein
series. This is the simplest instance of Franke's general theorem,
which asserts that automorphic forms on a reductive group are spanned
by Laurent coefficients of Eisenstein series induced from cuspidal
automorphic forms on Levi subgroups. Unlike Franke's general argument,
ours does not invoke Langlands' construction of the discrete automorphic
spectrum from cuspidal Eisenstein series. It rests instead on basic
analytic properties of automorphic forms and Green's identity.
\end{abstract}

\maketitle
\tableofcontents{}

\section{Introduction}

\subsection{Orientation}

Langlands\textquoteright{} foundational work \cite{Langlands-Spectral}
on the spectral decomposition of automorphic forms may be divided
into two parts. The first part, covering Chapters I-VI, received an
exposition from Harish-Chandra \cite{HarishChandraBook} and concerns
the basic analytic properties of automorphic forms in general, the
Maass-Selberg relations, and the meromorphic continuation of cuspidal
Eisenstein series. Harish-Chandra does not treat ``the last and the
most difficult part (Chapter VII)'' of Langlands' work, which uses
a sophisticated residue scheme. 

Franke's argument \cite{Franke_firstPaper} proves that automorphic
forms on a reductive group are linear combinations of Laurent coefficients
of Eisenstein series induced from cusp forms on Levi subgroups. Franke's
proof rests on Chapter VII of Langlands\textquoteright{} work. Waldspurger
emphasizes the delicacy of the argument in his Bourbaki exposition
\cite{Waldspurger1997Franke} and the reader may consult its \S3
for a precise statement of Franke's theorem. 

The present paper grew out of an attempt to rethink this circle of
ideas \cite{Dattu-Rethinking_Langlands}. The guiding view is that
Franke\textquoteright s theorem should be regarded more in the spirit
of Chapters I-VI of \cite{Langlands-Spectral} and has nothing to
do with the residue calculus that is central to Langlands' approach
to \emph{spectral} \emph{decomposition}. This paper supports that
view by giving, in the simplest nontrivial case, a proof that uses
only the general analytic properties of automorphic forms together
with an automorphic Green's identity. The latter is Maass' version
of the Maass-Selberg relation, suitably extended to generalized eigenfunctions. 

\subsection{Statement of the theorem}

\subsubsection*{Notation}

Let
\[
\Gamma=SL(2,\mathbb{Z}),\quad\mathfrak{H}=\left\{ z\in\mathbb{C}:\Im(z)>0\right\} ,\quad X=\Gamma\backslash\mathfrak{H},
\]
\[
\Delta=-y^{2}(\partial_{x}^{2}+\partial_{y}^{2}),\quad d\mu=\frac{dxdy}{y^{2}}.
\]
Let 
\[
\mathscr{F}\subset\left\{ z\in\mathfrak{H}:|\Re(z)|\le1/2,\ |z|\ge1\right\} 
\]
be the standard fundamental domain for the action of $\Gamma$ on
$\mathfrak{H}$. 

\subsubsection*{Eisenstein series}

Let 
\[
Q=\left\{ \begin{pmatrix}* & *\\
0 & *
\end{pmatrix}\in SL(2,\mathbb{R})\right\} 
\]
be the standard maximal parabolic subgroup of $SL(2,\mathbb{R})$.
The standard \emph{Eisenstein series }is
\[
E(s;z):=\sum_{\gamma\in(\Gamma\cap Q)\backslash\Gamma}\text{Im}(\gamma z)^{\frac{1+s}{2}},\quad\Re(s)>1
\]
The map 
\[
s\mapsto E(s):=E(s;\bullet)\in C^{\infty}(X)
\]
has a vector-valued meromorphic continuation to $\mathbb{C}$ and
satisfies the functional equation 
\[
E(s)=c(s)E(-s)
\]
for a meromorphic complex-valued function $c(s)$. 

\subsubsection*{Automorphic forms}

The space of automorphic forms $\mathcal{A}(X)$ consists of $F\in C^{\infty}(X)$
satisfying
\begin{itemize}
\item ($\mathfrak{z}$-finite) $\exists$ $0\neq P\in\mathbb{C}[x]$ such
that $P(\Delta)F=0$. 
\item (uniform moderate growth) $\exists N\in\mathbb{Z}_{\ge0}$ such that
for all $a,b\in\mathbb{Z}_{\ge0}$,
\[
|\partial_{x}^{a}\partial_{y}^{b}F(x+iy)|\ll_{a,b}y^{N}\quad\text{on }\mathscr{F}.
\]
 
\end{itemize}
Eisenstein series provide examples of automorphic forms. At each $s_{0}\in\mathbb{C}$,
we have the Laurent expansion
\[
E(s)=\sum_{n=-\infty}^{\infty}E_{s_{0},n}(s-s_{0})^{n},\quad E_{s_{0},n}\in\mathcal{A}(X)
\]
Let 
\[
\mathcal{E}(X):=\text{span}\left\{ E_{s,n}:n\in\mathbb{Z}\text{ and }s\in\mathbb{C}\right\} 
\]
One has 
\[
\mathcal{E}(X)\subset\mathcal{A}(X).
\]

\subsubsection*{Cusp forms}

Let
\[
\mathcal{A}(0,\infty):=\text{span}\left\{ \partial_{s}^{n}(y^{s})\in C^{\infty}(0,\infty):s\in\mathbb{C},\ n\in\mathbb{Z}_{\ge0}\right\} 
\]
Write
\[
C:\mathcal{A}(X)\to\mathcal{A}(0,\infty),\quad CF(y):=\int_{-1/2}^{1/2}F(x+iy)dx
\]
for the constant term map. The space of cusp forms is
\[
\mathcal{A}_{0}(X):=\ker C
\]
For $F\in\mathcal{A}(X)$, one has 
\[
C(P(\Delta)F)=P(D)(CF),\quad D=-y^{2}\frac{d^{2}}{dy^{2}},
\]
and it follows that $CF\in\mathcal{A}(0,\infty)$.

\subsubsection*{Franke's theorem}
\begin{thm}
We have 
\[
\mathcal{A}(X)=\mathcal{A}_{0}(X)\oplus\mathcal{E}(X)
\]
\end{thm}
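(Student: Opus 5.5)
The plan is to reduce to constant terms and then to finite-dimensional linear algebra, one generalized eigenvalue at a time. Since $F\in\mathcal{A}(X)$ is killed by some $P(\Delta)$, we can write $F$ as a finite sum of components $F_\lambda$ with $(\Delta-\lambda)^m F_\lambda=0$, each still of uniform moderate growth. The components are polynomials in $\Delta$ applied to $F$, and $\Delta$ preserves moderate growth. Parametrize $\lambda=(1-s^2)/4$ and normalize $\Re(s)\ge 0$. Laurent coefficients of $E$ at $s$ and at $-s$ lie in the same generalized eigenspaces, because $(\Delta-\lambda(s'))E(s')=0$ and we can differentiate in $s'$. So it suffices to prove, for each fixed $\lambda$ and $m$,
\[
\mathcal{A}_{\lambda,m}:=\ker(\Delta-\lambda)^m\cap\mathcal{A}(X)=\bigl(\mathcal{A}_{\lambda,m}\cap\mathcal{A}_0(X)\bigr)\oplus\bigl(\mathcal{A}_{\lambda,m}\cap\mathcal{E}(X)\bigr).
\]
Since $\mathcal{A}_0=\ker C$, this is equivalent to two statements:
\begin{enumerate}
\item[(a)] $C$ restricted to $\mathcal{E}_{\lambda,m}:=\mathcal{A}_{\lambda,m}\cap\mathcal{E}(X)$ is injective;
\item[(b)] $C(\mathcal{A}_{\lambda,m})=C(\mathcal{E}_{\lambda,m})$.
\end{enumerate}
Given (b), for any $F$ we pick $G\in\mathcal{E}$ with $CG=CF$, and then $F-G$ is a cusp form.

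The target of $C$ on $\mathcal{A}_{\lambda,m}$ is the kernel $V_{\lambda,m}$ of $(D-\lambda)^m$ on $\mathcal{A}(0,\infty)$. It is $2m$-dimensional, spanned by $y^{(1\pm s)/2}(\log y)^k$ for $k<m$; when $s=0$ one uses $y^{1/2}(\log y)^k$ for $k<2m$. On $V_{\lambda,m}$ we put a nondegenerate bilinear pairing, a Wronskian-type form. Its construction is as follows. For $f,g\in V_{\lambda,m}$, the boundary term in Green's identity on the truncated domain $\mathscr{F}_T=\mathscr{F}\cap\{y\le T\}$ is
\[
\int_{\mathscr{F}_T}(\Delta F\,G-F\,\Delta G)\,d\mu = \bigl(CF'\,CG-CF\,CG'\bigr)(T)+(\text{rapidly decaying}).
\]
For elements of generalized eigenspaces this is not constant in $T$. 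I would therefore pair $F$ against $(\Delta-\lambda)^{j}G$ and combine over $j$, so as to extract the $T$-independent "polynomial-free" part. That part defines a form $\langle\,,\rangle$ on $V_{\lambda,m}$ which is nondegenerate by direct computation on the monomial basis.

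The main claim, and the main obstacle, is that $C(\mathcal{A}_{\lambda,m})$ is isotropic for $\langle\,,\rangle$, so that its dimension is at most $m$. The strategy is to apply Green's identity to $F,G\in\mathcal{A}_{\lambda,m}$. The left side is then an integral over the compact part plus a Siegel-domain part. After the cusp parts are separated, that Siegel-domain part is an explicit polynomial in $T$, $T^{\pm s}$ and $\log T$ built from the constant terms. The difficulty is bookkeeping when exponents coincide, that is when $s=0$ or $s\in\mathbb{Z}$, and justifying that the non-constant-term contributions on $\{y=T\}$ decay. For the latter I would use the moderate-growth hypothesis together with the standard fact that $F-CF$ decays rapidly on $\mathscr{F}$ for $\mathfrak{z}$-finite $F$. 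Comparing coefficients of the independent functions of $T$ then forces $\langle CF,CG\rangle=0$.

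For the matching lower bound in (b), I would exhibit $m$ elements of $\mathcal{E}_{\lambda,m}$ with linearly independent constant terms. Take the Taylor/Laurent coefficients $E_{s,n}$ for the appropriate range of $n$, shifted past the pole order of $E$ at $s$. Their constant terms come from expanding $y^{(1+s')/2}+c(s')y^{(1-s')/2}$ at $s'=s$. Independence, and hence $\dim C(\mathcal{E}_{\lambda,m})=m$, follows by reading off leading logarithmic powers. This needs a case split according to whether $c$ is regular, has a pole, or has a zero at $s$, with $s=0$ handled separately using $c(0)=-1$. Combining with the upper bound gives (b).

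For (a), I would compute the dimension of $\mathcal{E}_{\lambda,m}$ directly. Every element of $\mathcal{E}_{\lambda,m}$ is a combination of Laurent coefficients at $s$ and $-s$. The functional equation $E(s')=c(s')E(-s')$ expresses those at $-s$ through those at $s$, so $\dim\mathcal{E}_{\lambda,m}\le m$. The injectivity of $C$ then follows from the rank-$m$ computation just described.
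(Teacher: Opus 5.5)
\textbf{The isotropy step has a genuine gap.} Your overall architecture matches the paper's. You reduce to generalized $\Delta$-eigenspaces and map by $C$ into the $2m$-dimensional kernel of $(D-\lambda)^m$. You then show the automorphic image is isotropic for a nondegenerate skew form, and fill half the dimension with Eisenstein coefficients. Your rank count for directness is a reasonable variant, since it avoids $\mathcal{A}_0(X)\oplus\mathcal{A}_0(X)^{\perp}$ and the finiteness theorem behind it.

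The gap is in the step you call the main obstacle: Green's identity for the pair $F,G$ plus comparison of coefficients in $T$ gives nothing about $(CF,CG)$ alone. Write $f=CF$, $g=CG$. Since $\frac{d}{dy}B_y(f,g)=(f\,Dg-g\,Df)/y^2$, the constant-term part of $\int_{X^T}(F\Delta G-G\Delta F)\,d\mu$ over $1\le y\le T$ equals $\int_1^T(f\,Dg-g\,Df)\,y^{-2}dy=B_T(f,g)-B_1(f,g)$ exactly.
\begin{itemize}
\item Every $T$-dependent coefficient therefore matches the boundary term tautologically.
\item The constant coefficient involves the integral over $X^1$ and the cuspidal parts, which are not determined by $f,g$.
\end{itemize}
What comes out is a Maass--Selberg formula for a regularized inner product, not a vanishing.

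A ``polynomial-free part'' of the Wronskian is also not the right form. At a regular point $s\neq0$ of $E$, take $f=CE_{s,0}=y^{\frac{1+s}{2}}+c(s)y^{\frac{1-s}{2}}$ and $g=CE_{s,1}$. Then $B_T(f,g)=-\tfrac12T^{s}+s\,c'(s)-s\,c(s)\log T+\tfrac12c(s)^2T^{-s}$. Its $T^0$-coefficient $s\,c'(s)$ is generically nonzero, although both $CE_{s,0}$ and $CE_{s,1}$ lie in $C(\mathcal{E}_{\lambda,2})$.

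\textbf{The missing idea} is the specific \emph{telescoped} combination that your remark about combining over $j$ points toward. The higher Green identity
\[
\int_{X^T}\bigl(U L_{\lambda}^{m}V-V L_{\lambda}^{m}U\bigr)\,d\mu=\sum_{j=0}^{m-1}\mathcal{B}_T\bigl(L_{\lambda}^{j}U,L_{\lambda}^{m-1-j}V\bigr)
\]
has left side \emph{identically zero} for $U,V\in\ker L_{\lambda}^m$, so there is no interior constant at all. On the ODE side, set $\Omega_T(f,g)=\sum_{j}B_T(\ell_\lambda^{j}f,\ell_\lambda^{m-1-j}g)$. Its $T$-derivative is $(f\,\ell_\lambda^m g-g\,\ell_\lambda^m f)/T^2$, which vanishes on $\ker\ell_\lambda^m$ by Lagrange's identity, so $\Omega_T$ is constant. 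Letting $T\to\infty$ and using the rapid decay of $F-CF$ gives $\Omega(CF,CG)=0$. This needs no coefficient comparison and no special bookkeeping at $s=0$ or $s\in\mathbb{Z}$. Nondegeneracy is immediate in the initial-value coordinates $\bigl((\ell_\lambda^{j}a)(T),(\ell_\lambda^{m-1-j}a)'(T)\bigr)_j$, where $\Omega$ is the standard symplectic matrix.

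\textbf{A secondary point.} Because you bypass $\mathcal{A}_0(X)^{\perp}$, your claims $\dim C(\mathcal{E}_{\lambda,m})=m$ and (a) silently assume that the leading Laurent coefficient of $E$ at $s$ is not a cusp form, i.e.\ that $E$ and $CE$ have the same pole order there. This must be supplied. It follows from $\langle E(s),\varphi\rangle=0$ for cusp forms $\varphi$, which is the paper's input, or from the explicit Fourier expansion. Otherwise the rank of $C$ on $\mathcal{E}_{\lambda,m}$ could drop below $m$.
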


\subsection{The issue to be addressed}

First we need

\subsubsection*{Some standard facts about automorphic forms}

A standard reference we use is Borel's book \cite{Borel_SL2}.
\begin{itemize}
\item (\cite{Borel_SL2}, Theorem 7.5) For every $F\in\mathcal{A}(X)$,
one has
\[
|F(z)-CF(\Im z)|\ll_{N}\Im(z)^{-N},\quad z\in\mathscr{F},\ N\in\mathbb{Z}.
\]
In particular, if $\varphi\in\mathcal{A}_{0}(X)$, then $\varphi$
is rapidly decreasing on $\mathscr{F}$ (\cite{Borel_SL2}, Theorem
7.5) and the pairing
\[
\langle\varphi,F\rangle=\int_{X}\varphi\overline{F}d\mu,\quad F\in\mathcal{A}(X),\ \varphi\in\mathcal{A}_{0}(X)
\]
is well defined. 
\item Define
\[
\mathcal{A}_{0}(X)^{\perp}:=\left\{ F\in\mathcal{A}(X):\langle\varphi,F\rangle=0,\ \forall\varphi\in\mathcal{A}_{0}(X)\right\} 
\]
The map $C:\mathcal{A}_{0}(X)^{\perp}\to\mathcal{A}(0,\infty)$ is
\emph{injective}. Proof: If $F\in\mathcal{A}_{0}(X)^{\perp}$ and
$CF=0$, then 
\[
\langle F,F\rangle=0\quad\implies\quad F=0.
\]
\item $\mathcal{E}(X)\subset\mathcal{A}_{0}(X)^{\perp}$. In the convergence
region of $E(s)$, a direct calculation gives
\[
\langle E(s),\varphi\rangle\equiv0,\quad\forall\varphi\in\mathcal{A}_{0}(X).
\]
By meromorphic continuation the relation holds for all $s\in\mathbb{C}$.
Extension to Laurent coefficients follows immediately. 
\end{itemize}
\begin{prop}
There is an algebraic direct-sum decomposition 
\[
\mathcal{A}(X)=\mathcal{A}_{0}(X)\oplus\mathcal{A}_{0}(X)^{\perp}.
\]
\end{prop}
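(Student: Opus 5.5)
We prove that $\mathcal{A}_0(X)\cap\mathcal{A}_0(X)^{\perp}=0$ and that $\mathcal{A}(X)=\mathcal{A}_0(X)+\mathcal{A}_0(X)^{\perp}$.

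\emph{Trivial intersection.} Let $\varphi\in\mathcal{A}_0(X)\cap\mathcal{A}_0(X)^{\perp}$. Then $\langle\varphi,\varphi\rangle=0$, and since $\varphi$ is rapidly decreasing and continuous, $\varphi=0$.

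\emph{Spanning: reduction to finite dimensions.} Fix $F\in\mathcal{A}(X)$ with $P(\Delta)F=0$, where $P\neq0$. Let $V=\{\varphi\in\mathcal{A}_0(X):P(\Delta)\varphi=0\}$. It suffices to find $\varphi_F\in V$ with
\[
\langle\psi,F-\varphi_F\rangle=0\quad\text{for all }\psi\in\mathcal{A}_0(X).
\]
We first show that $V$ is finite dimensional. Factor $P(x)=\prod(x-\lambda_i)^{m_i}$. Since $\Delta$ preserves cusp forms and cusp forms are rapidly decreasing, $\Delta$ is symmetric on $\mathcal{A}_0(X)$ with respect to $\langle\cdot,\cdot\rangle$; this follows from Green's identity on truncations of $\mathscr{F}$, where the boundary terms vanish by rapid decay. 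Hence the generalized eigenspaces of $\Delta$ in $V$ are genuine eigenspaces. Indeed, if $(\Delta-\lambda)^2\varphi=0$, set $\psi=(\Delta-\lambda)\varphi$. Then $\langle\psi,\psi\rangle=\langle\varphi,(\Delta-\bar\lambda)\psi\rangle$. Symmetry forces $\lambda$ to be real whenever $\psi\neq0$ is an eigenvector, so $(\Delta-\bar\lambda)\psi=0$, hence $\langle\psi,\psi\rangle=0$ and $\psi=0$. Each eigenspace $\{\varphi\in\mathcal{A}_0(X):\Delta\varphi=\lambda\varphi\}$ is finite dimensional by the standard finiteness of cusp forms with given eigenvalue (Borel \cite{Borel_SL2}, via compactness of the restriction to $L^2$ of cuspidal $\Delta$-eigenfunctions). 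Therefore $V$ is a finite direct sum of finite-dimensional eigenspaces, and eigenspaces for distinct eigenvalues are mutually orthogonal.

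\emph{Spanning: construction of $\varphi_F$.} Let $\varphi_F$ be the orthogonal projection of $F$ onto $V$ with respect to $\langle\cdot,\cdot\rangle$. This is well defined because $V$ is finite dimensional with a positive definite inner product and $\langle\varphi,F\rangle$ makes sense for $\varphi\in V$. Then $F':=F-\varphi_F$ satisfies $P(\Delta)F'=0$ and $\langle\varphi,F'\rangle=0$ for all $\varphi\in V$.

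It remains to show $\langle\psi,F'\rangle=0$ for every $\psi\in\mathcal{A}_0(X)$. Split $\psi=\sum\psi_j$ into generalized $\Delta$-eigencomponents, which by the argument above are genuine eigenvectors: $\Delta\psi_j=\mu_j\psi_j$ with $\mu_j$ real. This is the key step, and it requires the \emph{automorphic Green's identity}
\[
\langle\Delta\psi,G\rangle=\langle\psi,\Delta G\rangle,\qquad \psi\in\mathcal{A}_0(X),\ G\in\mathcal{A}(X).
\]
To prove it, apply Green's formula on $\mathscr{F}$ truncated at height $T$. The side boundary terms cancel by $\Gamma$-invariance. The boundary term at height $T$ is $\int(\psi\,\partial_y\bar G-\bar G\,\partial_y\psi)\,dx$, which tends to $0$ as $T\to\infty$ because $\psi$ and its derivatives decay rapidly while $G$ and its derivatives have moderate growth.

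Granting the identity, we get
\[
(\mu_j-\bar\lambda)^{k}\langle\psi_j,F'\rangle=\langle\psi_j,(\Delta-\lambda)^kF'\rangle
\]
for each $k$. If $\mu_j$ is not a root of $P$, take the product over the roots of $P$ with multiplicities: $P$ has coefficients, and using $\overline{P}$ as needed we obtain $\overline{P}(\mu_j)\langle\psi_j,F'\rangle=\langle\psi_j,P(\Delta)F'\rangle=0$, hence $\langle\psi_j,F'\rangle=0$. If $\mu_j$ is a root of $P$, then $\psi_j\in V$, so $\langle\psi_j,F'\rangle=0$ by construction.

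\emph{Main obstacle.} The delicate inputs are the finiteness of cuspidal eigenspaces and the vanishing of boundary terms in Green's identity. The first we cite from Borel; the second follows from the rapid decay of cusp forms stated earlier.
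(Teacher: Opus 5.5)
Your proposal is correct and follows the same route as the paper: orthogonally project $F$ onto the finite-dimensional space of cusp forms annihilated by $P(\Delta)$, then test $F-\varphi_F$ against $\Delta$-eigen cusp forms and split according to whether $P(\mu)=0$. The differences are minor. You spell out what the paper leaves tacit: the trivial intersection, the symmetry of $\Delta$ on cusp forms that yields a genuine eigenbasis, and the Green's identity used to move $P(\Delta)$ across the pairing. You also obtain finite-dimensionality from finiteness of cuspidal eigenspaces, rather than citing Borel's Theorem 8.5 for $\mathcal{H}_P$ directly.
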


\begin{proof}
We need the finite dimensionality result (\cite{Borel_SL2} theorem
8.5): if $P\in\mathbb{C}[x]$, then
\[
\mathcal{H}_{P}:=\left\{ F\in\mathcal{A}(X):P(\Delta)F=0\right\} ,
\]
is finite dimensional. 

Let $F\in\mathcal{A}(X)$ and choose $P\in\mathbb{C}[x]$ such that
$P(\Delta)F=0$. The space
\[
\mathcal{H}_{P,0}:=\mathcal{H}_{P}\cap\mathcal{A}_{0}(X)\subset L^{2}(X)
\]
has an orthonormal basis of $\Delta$-eigenfunctions. Choose such
a basis $\varphi_{1},\dots,\varphi_{n}$ and let 
\[
F_{\text{cusp}}=\sum_{j=1}^{n}\langle F,\varphi_{j}\rangle\varphi_{j}
\]

We claim 
\[
\langle F-F_{\text{cusp}},\varphi\rangle=0,\quad\forall\varphi\in\mathcal{A}_{0}(X)
\]
Since $\mathbb{C}[\Delta]\varphi\subset L^{2}(X)$ is finite dimensional,
it is enough to prove the above equality for a $\Delta$-eigenfunction.
Let
\[
\Delta\varphi=\mu\varphi,\quad\text{for some }\mu\in\mathbb{R}
\]
If $P(\mu)\neq0$, then 
\[
0=\langle P(\Delta)F,\varphi\rangle=\overline{P(\mu)}\langle F,\varphi\rangle\quad\implies\quad\langle F,\varphi\rangle=0
\]
If $P(\mu)=0$, then $\varphi\in\mathcal{H}_{P,0}$. 
\end{proof}

\subsubsection*{A reduction }

For $F\in\mathcal{A}(X)$, the space 
\[
V_{F}:=\left\{ P(\Delta)F:P\in\mathbb{C}[x]\right\} 
\]
is a finite dimensional $\mathbb{C}[x]$-module. By Jordan decomposition,
to prove Franke's theorem, it is enough to show
\[
\mathcal{A}_{\lambda,N}^{\perp}:=\left\{ F\in\mathcal{A}_{0}(X)^{\perp}:(\Delta-\lambda)^{N}F=0\right\} \subset\mathcal{E}(X)
\]
for $\lambda\in\mathbb{C}$ and $N\in\mathbb{Z}_{\ge1}$. The case
$N=0$ is vacuous. 

Let 
\[
\mathcal{E}_{\lambda,N}=\left\{ F\in\mathcal{E}(X):(\Delta-\lambda)^{N}F=0\right\} 
\]
By the injectivity of the constant term map on $\mathcal{A}_{0}(X)^{\perp}$,
it is enough to show that 
\[
C\left(\mathcal{A}_{\lambda,N}^{\perp}\right)=C\left(\mathcal{E}_{\lambda,N}\right).
\]

\subsubsection*{Dimension of $C\left(\mathcal{E}_{\lambda,N}\right)$}

A direct computation shows that
\[
\dim\mathcal{E}_{\lambda,N}=N.
\]
Since $C$ is injective on $\mathcal{A}_{0}(X)^{\perp}$, it follows
that
\[
\dim C\left(\mathcal{E}_{\lambda,N}\right)=N.
\]
Indeed, write 
\[
\lambda(s)=(1-s^{2})/4
\]
For $\lambda\neq\frac{1}{4}$, choose $s_{0}\neq0$ such that $\lambda(s_{0})=\lambda$.
By the functional equation of $E(s)$, the Laurent coefficients at
$s_{0}$ and $-s_{0}$ span the same generalized eigenspace. The first
$N$ non-zero Laurent coefficients at such an $s_{0}$ form a basis
for $\mathcal{E}_{\lambda,N}$. In the exceptional case $\lambda=\frac{1}{4}$
(i.e., $s_{0}=0$), one may take

\[
\left\{ E_{0,1},E_{0,3},\dots,E_{0,2N-1}\right\} 
\]
as a basis. The even coefficients are redundant:
\[
\text{span}\left\{ E_{0,2},E_{0,4},\dots,E_{0,2N}\right\} \subset\text{span}\left\{ E_{0,1},E_{0,3},\dots,E_{0,2N-1}\right\} .
\]

\subsubsection*{Dimension mismatch and the issue}

Let 
\[
\mathscr{W}_{\lambda,N}:=\left\{ f\in C^{\infty}(0,\infty):(D-\lambda)^{N}f=0\right\} .
\]
We have
\[
F\in\mathcal{A}_{\lambda,N}^{\perp}\quad\implies\quad CF\in\mathscr{W}_{\lambda,N}
\]
From the theory of ordinary differential equations, 
\[
\dim\mathscr{W}_{\lambda,N}=2N.
\]
Thus the differential equation alone leaves a space twice as large
as the space supplied by Eisenstein series. The point is to show that
automorphy cuts this space down by half. 

Let

\[
\mathscr{V}_{\lambda,N}:=\left\{ CF:F\in\mathcal{A}_{\lambda,N}^{\perp}\right\} 
\]
be the image of $\mathcal{A}_{\lambda,N}^{\perp}$ under the constant
term map. The proof reduces to controlling the dimension of this space.
We have 
\[
C\left(\mathcal{E}_{\lambda,N}\right)\subset\mathscr{V}_{\lambda,N}\quad\implies\quad\dim\mathscr{V}_{\lambda,N}\ge\dim C\left(\mathcal{E}_{\lambda,N}\right)=N
\]
The issue is to show
\[
\dim\mathscr{V}_{\lambda,N}\le N
\]
which is a consequence of the following. 
\begin{thm}
\label{thm:skew-form}There exists a non-degenerate skew-symmetric
bilinear form
\[
\Omega_{\lambda,N}:\mathscr{W}_{\lambda,N}\times\mathscr{W}_{\lambda,N}\to\mathbb{C}
\]
for which $\mathscr{V}_{\lambda,N}$ is an isotropic subspace. Consequently,
together with $\dim\mathscr{V}_{\lambda,N}\ge N$ above,
\[
\dim\mathscr{V}_{\lambda,N}=N,
\]
and $\mathscr{V}_{\lambda,N}$ is a maximal isotropic subspace for
$\Omega_{\lambda,N}$. 
\end{thm}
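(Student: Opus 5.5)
The plan is to build $\Omega_{\lambda,N}$ as an iterated Wronskian adapted to $L:=D-\lambda$. Then I would show it is constant, skew and non-degenerate, and finally get isotropy from Green's identity on a truncated fundamental domain.

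\emph{Construction and constancy.} For $f,g\in C^{\infty}(0,\infty)$ put $W(f,g)=fg'-f'g$. Since $D=-y^{2}\frac{d^{2}}{dy^{2}}$ has no first-order term, we have
\[
\frac{d}{dy}W(a,b)=ab''-a''b=-y^{-2}\bigl(a\,Lb-b\,La\bigr).
\]
For $f,g\in\mathscr{W}_{\lambda,N}$ I define
\[
\Omega_{\lambda,N}(f,g):=\sum_{j=0}^{N-1}W\bigl(L^{j}f,\,L^{N-1-j}g\bigr).
\]
Differentiating gives $-y^{-2}$ times $\sum_{j}\bigl(L^{j}f\,L^{N-j}g-L^{j+1}f\,L^{N-1-j}g\bigr)$. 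This sum telescopes to $f\,L^{N}g-L^{N}f\,g=0$, so $\Omega_{\lambda,N}(f,g)$ is a constant function of $y$, i.e.\ a number. Skew-symmetry follows by swapping $f,g$, reindexing $j\mapsto N-1-j$, and using $W(b,a)=-W(a,b)$.

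\emph{Non-degeneracy.} I would argue by induction on $N$. The case $N=1$ is the classical fact that the Wronskian is a non-degenerate pairing on the $2$-dimensional solution space of the second-order equation $Lf=0$, which holds for every $\lambda$, including $\lambda=\frac14$. Now suppose $f\in\mathscr{W}_{\lambda,N}$ satisfies $\Omega_{\lambda,N}(f,\cdot)=0$.
\begin{itemize}
\item Testing against $g\in\ker L$ kills every term except $j=N-1$, leaving $W(L^{N-1}f,g)=0$ for all $g\in\ker L$. Since $L^{N-1}f\in\ker L$, the case $N=1$ gives $L^{N-1}f=0$, so $f\in\mathscr{W}_{\lambda,N-1}$.
\item For such $f$ the $j=N-1$ term vanishes identically, and the identity $\Omega_{\lambda,N}(f,g)=\Omega_{\lambda,N-1}(f,Lg)$ holds.
\item The map $L:\mathscr{W}_{\lambda,N}\to\mathscr{W}_{\lambda,N-1}$ is surjective: its kernel has dimension $2$, and $\dim\mathscr{W}_{\lambda,N}=2N$.
\item Hence $\Omega_{\lambda,N-1}(f,\cdot)=0$ on all of $\mathscr{W}_{\lambda,N-1}$, and the inductive hypothesis gives $f=0$.
\end{itemize}
This step is pure bookkeeping, but it is where care is needed.

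\emph{Isotropy.} Let $F,G\in\mathcal{A}_{\lambda,N}^{\perp}$, and set $a_{j}=(\Delta-\lambda)^{j}F$ and $b_{j}=(\Delta-\lambda)^{N-1-j}G$. Let $\mathscr{F}_{T}=\mathscr{F}\cap\{y\le T\}$. I apply the bilinear Green identity
\[
\int_{\mathscr{F}_{T}}\bigl(a\,\Delta b-b\,\Delta a\bigr)d\mu=\pm\int_{-1/2}^{1/2}\bigl(a\,\partial_{y}b-b\,\partial_{y}a\bigr)(x+iT)\,dx .
\]
The contributions of the side and arc boundaries cancel under the $\Gamma$-identifications. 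Summing over $j$, the interior integrands telescope exactly as above to $F(\Delta-\lambda)^{N}G-G(\Delta-\lambda)^{N}F=0$. Therefore
\[
\sum_{j}\int_{-1/2}^{1/2}W_{y}(a_{j},b_{j})(x+iT)\,dx=0 .
\]
Next, write $a_{j}=Ca_{j}+\tilde a_{j}$ and $b_{j}=Cb_{j}+\tilde b_{j}$, and use $C(P(\Delta)F)=P(D)CF$. The cross terms integrate to zero over $x$, because $Ca_{j}$ does not depend on $x$ and $\tilde b_{j}$ has zero mean in $x$, as do its $y$-derivatives. By the cited estimate from Borel, Theorem 7.5 (applied also to derivatives, which are again automorphic forms), the remaining term is $O(T^{-M})$. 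We conclude that $\Omega_{\lambda,N}(CF,CG)=O(T^{-M})$. Since the left side does not depend on $T$, it vanishes.

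So $\mathscr{V}_{\lambda,N}$ is isotropic. An isotropic subspace of a $2N$-dimensional symplectic space has dimension at most $N$. Combined with $\dim\mathscr{V}_{\lambda,N}\ge N$, this gives $\dim\mathscr{V}_{\lambda,N}=N$, and $\mathscr{V}_{\lambda,N}$ is maximal isotropic.

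I expect the main obstacle to be the careful justification of the boundary cancellation in Green's identity on $\mathscr{F}_{T}$. One needs to handle the identification of the arc $|z|=1$ under $z\mapsto-1/z$, including the normal derivatives there. With that done, the constant term of $\Omega_{\lambda,N}$ is then completely controlled by the decay of $F-CF$.
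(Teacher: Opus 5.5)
Your proposal is correct, and apart from the non-degeneracy step it is the paper's argument. Your $\Omega_{\lambda,N}$ is the paper's form up to an overall sign: your $W(a,b)$ equals $-B_T(a,b)$, and your $L$ is the paper's $\ell_\lambda$. Constancy is the same telescoping computation as in Lemma~\ref{lem:indep-of-T}. Isotropy follows exactly the paper's route: the higher Green identity on the truncated domain, vanishing of the cross terms after integrating in $x$, and $T\to\infty$.

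For non-degeneracy the two routes genuinely differ.
\begin{itemize}
\item The paper fixes $T$ and takes $p_j(a)=(\ell_\lambda^{j}a)(T)$, $q_j(a)=(\ell_\lambda^{N-1-j}a)'(T)$ as coordinates on $\mathscr{W}_{\lambda,N}$. In these coordinates $\Omega_{\lambda,N}$ is literally the standard symplectic matrix. This gives an explicit Darboux basis, hence a Lagrangian splitting, in one line. It rests on $J_T$ being bijective, which is a triangular change of variables from the Cauchy data $a(T),\dots,a^{(2N-1)}(T)$, since $\ell_\lambda^{j}a=(-y^{2})^{j}a^{(2j)}+(\text{lower order})$.
\item Your induction reduces everything to the classical $N=1$ Wronskian fact. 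It uses the identity $\Omega_{\lambda,N}(f,g)=\Omega_{\lambda,N-1}(f,Lg)$ for $f\in\mathscr{W}_{\lambda,N-1}$, together with surjectivity of $L:\mathscr{W}_{\lambda,N}\to\mathscr{W}_{\lambda,N-1}$. It avoids the coordinate bookkeeping and shows how the form interacts with the filtration $\mathscr{W}_{\lambda,1}\subset\dots\subset\mathscr{W}_{\lambda,N}$. The cost is invoking $\dim\mathscr{W}_{\lambda,N}=2N$, and it yields no explicit symplectic basis.
\end{itemize}

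One small correction concerns the remainder estimate. On $\mathfrak{H}$, $\partial_y F$ is not $\Gamma$-invariant, so ``derivatives are again automorphic forms'' holds only for $K$-finite forms on $\Gamma\backslash SL(2,\mathbb{R})$. You do not need that claim. As in the paper, rapid decay of $F-CF$ combined with moderate growth of $\partial_y(F-CF)$ already makes the remainder $O(T^{-M})$. That moderate growth comes from the definition of $\mathcal{A}(X)$ and from $CF\in\mathcal{A}(0,\infty)$.
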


This theorem will be proved in the next section. 

\section{Proof of the theorem}

The proof uses Maass' version \cite{MaassWaveForms} of the Maass-Selberg
relations, suitably extended for generalized $\Delta$-eigenfunctions.
It is also the version Harish-Chandra used in his work (see Varadarajan
\cite{Varadarajan_89book}, \S8.6). 

For $T>1$ let 
\[
X^{T}=\left\{ z\in\mathscr{F}:\Im(z)\le T\right\} 
\]
be the truncation of $X$ at height $T$. We think of $X^{T}$ as
the truncation of the modular curve and write 
\[
\partial X^{T}:=\left\{ x+iy:-1/2\le x\le1/2,\ y=T\right\} 
\]

Throughout this section, let 
\[
U,V\in\mathcal{A}(X)\qquad u:=CU,\quad v:=CV.
\]
For $\lambda\in\mathbb{C}$ set 
\[
L_{\lambda}:=\Delta-\lambda,\quad\ell_{\lambda}=D-\lambda,\quad D=-y^{2}\frac{d^{2}}{dy^{2}}.
\]
It follows immediately that
\[
L_{\lambda}^{N}U=0\quad\implies\quad\ell_{\lambda}^{N}u=0.
\]

Let 
\[
\mathcal{B}_{T}(U,V):=\int_{-1/2}^{1/2}\left(V\partial_{y}U-U\partial_{y}V\right)(x+iT)dx
\]
The following identities are standard (Robert \cite{Robert_Multi},
\S14.3 and Iwaniec \cite{IwaniecBookSpectral}, Theorem 6.14).
\begin{lem}
\label{lem:Green-identities}(Green identities)

(a) Simple Green's identity:
\[
\int_{X^{T}}\left(U\Delta V-V\Delta U\right)d\mu=\int_{X^{T}}\left(UL_{\lambda}V-VL_{\lambda}U\right)d\mu=\mathcal{B}_{T}(U,V).
\]

(b) Higher Green's identity: for every $m\in\mathbb{Z}_{\ge0}$,
\[
\int_{X^{T}}\left(UL_{\lambda}^{m+1}V-VL_{\lambda}^{m+1}U\right)d\mu=\sum_{j=0}^{m}\mathcal{B}_{T}(L_{\lambda}^{j}U,L_{\lambda}^{m-j}V)
\]
\end{lem}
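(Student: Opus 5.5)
The statement to prove is Lemma~\ref{lem:Green-identities}, the Green identities on the truncated domain $X^{T}$. The approach is the classical divergence theorem on $\mathscr{F}$, cut off at height $T$. Automorphy makes the contributions of the side boundary and the arc cancel, so only the horizontal segment $\partial X^{T}$ survives.

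For (a), the first equality is immediate: the $\lambda$-terms cancel, since $U(\lambda V)-V(\lambda U)=0$. For the second equality, use that $d\mu=y^{-2}dx\,dy$ and $\Delta=-y^{2}(\partial_x^2+\partial_y^2)$. Hence
\[
(U\Delta V-V\Delta U)\,d\mu=-(U\nabla^{2}V-V\nabla^{2}U)\,dx\,dy,
\]
where $\nabla^{2}$ is the Euclidean Laplacian. We also have $U\nabla^2V-V\nabla^2U=\operatorname{div}(U\nabla V-V\nabla U)$. Applying the Euclidean divergence theorem on the compact region $X^{T}$ (a curvilinear polygon with piecewise smooth boundary) gives
\[
\int_{X^T}(U\Delta V-V\Delta U)\,d\mu=-\oint_{\partial}\bigl(U\partial_n V-V\partial_n U\bigr)\,ds .
\]
The boundary has four pieces: the two vertical sides $x=\pm 1/2$, the arc $|z|=1$, and the top segment $y=T$.

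The key step is that the sides and the arc cancel. The sides are identified by $z\mapsto z+1$ with opposite orientations, and the two halves of the arc are identified by $z\mapsto -1/z$, again with orientations reversed. The $1$-form $U\star dV-V\star dU$ (with $\star$ the Hodge star of the flat metric) is conformally invariant in dimension two. Since $U,V$ are $\Gamma$-invariant, this form is $\Gamma$-invariant, so the paired boundary contributions cancel exactly. On the top segment the outward normal is $\partial_y$, so the contribution there is $\int_{-1/2}^{1/2}(V\partial_yU-U\partial_yV)(x+iT)\,dx=\mathcal{B}_T(U,V)$. The overall sign is fixed by checking this case directly.

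For (b), the plan is a telescoping argument. Define $W_j:=L_\lambda^{j}U$ for $0\le j\le m+1$. Then
\[
UL_\lambda^{m+1}V-VL_\lambda^{m+1}U=\sum_{j=0}^{m}\Bigl(L_\lambda^{j}U\cdot L_\lambda^{m+1-j}V-L_\lambda^{j+1}U\cdot L_\lambda^{m-j}V\Bigr),
\]
which telescopes correctly: the $j=0$ first term is $UL^{m+1}V$, the $j=m$ last term is $L^{m+1}U\cdot V$, and the middle terms cancel. Each summand has the form $A\,L_\lambda B-B\,L_\lambda A$ with $A=L_\lambda^jU$ and $B=L_\lambda^{m-j}V$. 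Both $A$ and $B$ lie in $\mathcal{A}(X)$, because $\mathcal{A}(X)$ is stable under $\Delta$. Applying (a) to each summand yields $\sum_j\mathcal{B}_T(L_\lambda^jU,L_\lambda^{m-j}V)$.

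The main obstacle is the boundary cancellation in (a), in particular on the arc. It is cleanest to verify it by noting that $\gamma:z\mapsto-1/z$ is a Euclidean conformal map. Under a conformal map $\phi$, the normal derivative scales as $\partial_n(f\circ\phi)=|\phi'|(\partial_nf)\circ\phi$ and arclength scales as $ds\mapsto|\phi'|^{-1}ds$. Because the factors of $|\phi'|$ cancel, the boundary integrand is preserved while the orientation is reversed.
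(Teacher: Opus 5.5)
Your proof is correct and follows the paper's route. Part (a) is the standard divergence-theorem computation on the truncated fundamental domain $X^{T}$, which the paper simply cites from Robert and Iwaniec; your side and arc contributions cancel by $\Gamma$-invariance, and your sign on the top segment matches $\mathcal{B}_T(U,V)$. Part (b) is exactly the paper's telescoping reduction to (a).

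One small fix in your last paragraph: $z\mapsto -1/z$ sends the outward normal of $\mathscr{F}$ on the arc to the inward normal at the image point. So in the unoriented $\partial_n\,ds$ formulation the normal derivative changes sign, and it is this sign (equivalently, the orientation reversal in your invariant $1$-form formulation) that makes the two halves of the arc cancel.
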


\begin{proof}
Part (b) follows from part (a) by a telescoping cancellation. 
\end{proof}
\bigskip{}

For $a,b\in C^{\infty}(0,\infty)$, let
\[
B_{T}(a,b):=a'(T)b(T)-a(T)b'(T).
\]

\begin{lem}
\label{lem:passing-to-the-const}(passing to the constant terms) We
have 
\[
\mathcal{B}_{T}(U,V)=B_{T}(u,v)+R_{T}(U,V),
\]
and 
\[
\lim_{T\to\infty}R_{T}(U,V)=0
\]
\end{lem}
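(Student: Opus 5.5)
Write $U=u+\widetilde U$ and $V=v+\widetilde V$ on the strip $\{x+iy:|x|\le 1/2,\ y\ge 1\}$, where $\widetilde U:=U-CU$ and $\widetilde V:=V-CV$ (abusing notation, $u(y)$ is viewed as a function of $z$). Substituting into $\mathcal{B}_{T}(U,V)$ and expanding bilinearly gives four terms. The term with $u,v$ only gives exactly $B_{T}(u,v)$, because $u,v$ are independent of $x$ and the integral is over an interval of length one. (Check the sign convention: $v\,u'-u\,v'=u'(T)v(T)-u(T)v'(T)=B_T(u,v)$, as required.) The remainder is defined as
\[
R_{T}(U,V):=\int_{-1/2}^{1/2}\left(v\,\partial_{y}\widetilde U-\widetilde U\,v'+\widetilde V\,u'-u\,\partial_{y}\widetilde V+\widetilde V\,\partial_{y}\widetilde U-\widetilde U\,\partial_{y}\widetilde V\right)(x+iT)\,dx .
\]

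The cross terms that are linear in $\widetilde U$ or $\widetilde V$ actually vanish identically. For instance $\int_{-1/2}^{1/2}\widetilde U(x+iT)\,dx=C\widetilde U(T)=0$ by definition, and likewise $\int\partial_{y}\widetilde U(x+iT)\,dx=\partial_{T}\int\widetilde U(x+iT)\,dx=0$, with differentiation under the integral justified by smoothness on a compact interval. Since $u(T),u'(T),v(T),v'(T)$ are constant in $x$, all four cross terms are zero. Thus
\[
R_{T}(U,V)=\int_{-1/2}^{1/2}\left(\widetilde V\,\partial_{y}\widetilde U-\widetilde U\,\partial_{y}\widetilde V\right)(x+iT)\,dx .
\]

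It remains to show this tends to $0$. By the cited Borel Theorem 7.5, $|\widetilde U(z)|\ll_{N}\Im(z)^{-N}$ on $\mathscr{F}$, and similarly for $\widetilde V$. The main point is the corresponding decay of $\partial_{y}\widetilde U$. I would obtain it by applying Theorem 7.5 to $\partial_{y}$-type derivatives of $U$, which are not automorphic in general. Two routes are available:

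\begin{itemize}
\item Use the Fourier expansion $\widetilde U(x+iy)=\sum_{n\neq0}a_{n}(y)e^{2\pi inx}$. The rapid decay in Borel's proof holds coefficientwise, with derivative bounds, since $a_n$ solves an ODE in $y$ with decaying solutions.
\item Use uniform moderate growth together with interior estimates. Alternatively, note that $\partial_{y}$ is a combination of the $\mathfrak{sl}_2$ actions, and Borel's result is stated for all $\mathfrak{g}$-derivatives.
\end{itemize}

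I expect this derivative estimate to be the only delicate step. I would first try the Fourier route: $\partial_y$ commutes with taking Fourier coefficients, and the bound $|\partial_y^b a_n(y)|\ll y^{-N}$ follows from the standard argument. With both factors $O(T^{-N})$, the integrand is $O(T^{-2N})$ uniformly in $x$. Hence $R_{T}(U,V)\to0$ as $T\to\infty$, which proves the lemma.
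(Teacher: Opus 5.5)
Your decomposition, the vanishing of the cross terms via $\int_{-1/2}^{1/2}\widetilde U(x+iT)\,dx=0$ and its $T$-derivative, and the resulting formula for $R_T(U,V)$ are exactly the paper's argument. The only difference is the last step: the derivative decay you flag as delicate is unnecessary, and your Fourier sketch of it would in any case need bounds uniform in $n$. As in the paper, it suffices that $\widetilde U,\widetilde V$ are rapidly decreasing while $\partial_y\widetilde U=\partial_yU-u'$ and $\partial_y\widetilde V$ merely have moderate growth, which follows at once from the uniform moderate growth condition in the definition of $\mathcal{A}(X)$ (with $a=0$, $b=1$) and the polynomial growth of $u',v'\in\mathcal{A}(0,\infty)$; hence each term of the integrand is $O(T^{-M})$ for every $M$.
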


\begin{proof}
Write 
\[
U=u+U^{\circ},\qquad V=v+V^{\circ}.
\]
For all $y>0$, 
\[
\int_{-1/2}^{1/2}U^{\circ}(x+iy)dx=\int_{-1/2}^{1/2}V^{\circ}(x+iy)dx=0,
\]
and differentiating these identities with respect to $y$ gives
\[
\int_{-1/2}^{1/2}U_{y}^{\circ}(x+iy)dx=\int_{-1/2}^{1/2}V_{y}^{\circ}(x+iy)dx=0.
\]
Expanding $\mathcal{B}_{T}(U,V)$, all mixed terms vanish after integration
over $x$, so that 
\[
\mathcal{B}_{T}(U,V)=B_{T}(u,v)+\int_{-1/2}^{1/2}\left(V^{\circ}\partial_{y}U^{\circ}-U^{\circ}\partial_{y}V^{\circ}\right)(x+iT)dx
\]
Let
\[
R_{T}(U,V):=\int_{-1/2}^{1/2}\left(V^{\circ}\partial_{y}U^{\circ}-U^{\circ}\partial_{y}V^{\circ}\right)(x+iT)dx
\]
We have 
\[
\lim_{T\to\infty}R_{T}(U,V)=0
\]
since $U^{\circ}$ and $V^{\circ}$ are rapidly decreasing and their
first $y$-derivatives are of moderate growth as $T\to\infty$.
\end{proof}
The following lemmas \ref{lem:indep-of-T} and \ref{lem:non-degeneracy}
are quite standard from the point of view of ordinary differential
equations in the context of \emph{Lagrange's identity} (\cite{Coddington-Levinson},
\S3.6). For the reader's convenience, we present the proofs. 

For $a,b\in C^{\infty}(0,\infty)$, $N\in\mathbb{Z}_{\ge1}$, and
$T\ge1$, let
\[
\Omega_{\lambda,N;T}(a,b):=\sum_{j=0}^{N-1}B_{T}(\ell_{\lambda}^{j}a,\ell_{\lambda}^{N-1-j}b)
\]
This is the candidate symplectic form on 
\[
\mathscr{W}_{\lambda,N}:=\ker\ell_{\lambda}^{N}=\left\{ f\in C^{\infty}(0,\infty):\ell_{\lambda}^{N}f=0\right\} 
\]

\begin{lem}
\label{lem:indep-of-T}(independence of $T$) If $a,b\in\mathscr{W}_{\lambda,N}$,
then $\Omega_{\lambda,N;T}(a,b)$ is independent of $T$. 
\end{lem}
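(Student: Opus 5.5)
The plan is to differentiate $\Omega_{\lambda,N;T}(a,b)$ in $T$ and show the derivative vanishes for $a,b\in\mathscr{W}_{\lambda,N}$. This is the one-dimensional analogue of the higher Green's identity (Lemma \ref{lem:Green-identities}(b)), i.e.\ Lagrange's identity for the formally symmetric operator $\ell_\lambda$ with respect to the measure $dy/y^2$.

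The first step is the basic identity for $a,b\in C^\infty(0,\infty)$:
\[
\frac{d}{dT}B_T(a,b)=a''(T)b(T)-a(T)b''(T).
\]
Since $\ell_\lambda=-y^2\frac{d^2}{dy^2}-\lambda$, we have $a''=-y^{-2}(\ell_\lambda a+\lambda a)$. Substituting, the $\lambda$-terms cancel, and we get
\[
\frac{d}{dT}B_T(a,b)=T^{-2}\bigl(a(T)\,(\ell_\lambda b)(T)-(\ell_\lambda a)(T)\,b(T)\bigr).
\]

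Next, apply this identity with $a_j:=\ell_\lambda^j a$ and $b_j:=\ell_\lambda^{N-1-j}b$, and sum over $j=0,\dots,N-1$. This gives
\[
T^{2}\frac{d}{dT}\Omega_{\lambda,N;T}(a,b)=\sum_{j=0}^{N-1}\Bigl(\ell_\lambda^j a\cdot\ell_\lambda^{N-j}b-\ell_\lambda^{j+1}a\cdot\ell_\lambda^{N-1-j}b\Bigr)(T).
\]
The sum telescopes: the second term at index $j$ cancels the first term at index $j+1$. What remains is
\[
a\cdot\ell_\lambda^{N}b-\ell_\lambda^{N}a\cdot b .
\]
This vanishes because $a,b\in\mathscr{W}_{\lambda,N}=\ker\ell_\lambda^N$. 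So the derivative in $T$ is identically zero on $(0,\infty)$, and $\Omega_{\lambda,N;T}(a,b)$ is constant in $T$.

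The only point needing care is the index bookkeeping in the telescoping sum, which is the same mechanism as in Lemma \ref{lem:Green-identities}(b). No analytic obstacle arises, since everything is smooth on $(0,\infty)$.
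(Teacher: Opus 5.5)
Your proof is correct and follows essentially the same route as the paper: you differentiate $B_T$ in $T$, use $a''=-y^{-2}(\ell_\lambda a+\lambda a)$ to get $\frac{d}{dT}B_T(a,b)=T^{-2}\bigl(a\,\ell_\lambda b-(\ell_\lambda a)\,b\bigr)$, and telescope the sum down to $T^{-2}\bigl(a\,\ell_\lambda^{N}b-(\ell_\lambda^{N}a)\,b\bigr)=0$. Your index bookkeeping in the telescoping step is right.
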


\begin{proof}
An easy calculation shows
\[
\frac{d}{dy}B_{y}(a,b)=a''b-ab''=\frac{a(\ell_{\lambda}b)-(\ell_{\lambda}a)b}{y^{2}}
\]
Thus, 
\[
\frac{d}{dy}\Omega_{\lambda,m+1;y}(a,b)=\frac{1}{y^{2}}\sum_{j=0}^{m}\left[(\ell_{\lambda}^{j}a)(\ell_{\lambda}^{m+1-j}b)-(\ell_{\lambda}^{j+1}a)(\ell_{\lambda}^{m-j}b)\right]
\]
\[
=\frac{a(\ell_{\lambda}^{m+1}b)-(\ell_{\lambda}^{m+1}a)b}{y^{2}}
\]
Taking $m+1=N$, the result follows. 
\end{proof}
We write $\Omega_{\lambda,N}$ for this skew form on $\mathscr{W}_{\lambda,N}$. 
\begin{lem}
\label{lem:non-degeneracy}The skew-form $\Omega_{\lambda,N}$ is
non-degenerate for all $\lambda\in\mathbb{C}$ and $N\in\mathbb{Z}_{\ge1}$. 
\end{lem}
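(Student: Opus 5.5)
We prove non-degeneracy by induction on $N$, using the filtration
\[
0\subset\mathscr{W}_{\lambda,1}\subset\mathscr{W}_{\lambda,2}\subset\cdots\subset\mathscr{W}_{\lambda,N},
\]
where $\ell_{\lambda}$ maps $\mathscr{W}_{\lambda,N}$ onto $\mathscr{W}_{\lambda,N-1}$ with kernel $\mathscr{W}_{\lambda,1}$. Surjectivity holds because the inhomogeneous second-order equation $\ell_{\lambda}f=g$ always has smooth solutions on $(0,\infty)$. The dimension count $2N=2(N-1)+2$ is consistent with this.

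\textbf{Base case $N=1$.} Here $\Omega_{\lambda,1}(a,b)=B_{T}(a,b)=a'b-ab'$ is, up to sign, the Wronskian of two solutions of the second-order equation $\ell_{\lambda}f=0$. Since $D-\lambda=-y^{2}(d^{2}/dy^{2})-\lambda$ has leading coefficient nonvanishing on $(0,\infty)$, the Wronskian of a fundamental system is nonzero. Hence $B_{T}$ is a nondegenerate skew form on the $2$-dimensional space $\mathscr{W}_{\lambda,1}$.

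\textbf{Inductive step.} We record two identities.

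\emph{Compatibility with $\ell_{\lambda}$.} For $a\in\mathscr{W}_{\lambda,N}$ and $b\in\mathscr{W}_{\lambda,N-1}$, write $b=\ell_{\lambda}\tilde b$ with $\tilde b\in\mathscr{W}_{\lambda,N}$. Reindexing the sum defining $\Omega$, and noting that the term with $j=N-1$ involves $\ell_{\lambda}^{N-1}a$ and $\tilde b$ while all other terms match, we expect
\[
\Omega_{\lambda,N}(a,\tilde b)=\Omega_{\lambda,N-1}(\ell_{\lambda}a,\tilde b)+B_{T}(a,\ell_{\lambda}^{N-1}\tilde b),
\]
with the precise bookkeeping to be checked.

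\emph{Pairing with the bottom piece.} A cleaner route is the formula, for $b\in\mathscr{W}_{\lambda,1}=\ker\ell_{\lambda}$,
\[
\Omega_{\lambda,N}(a,b)=B_{T}(\ell_{\lambda}^{N-1}a,b).
\]
This holds because every term with $N-1-j\ge1$ contains $\ell_{\lambda}^{N-1-j}b=0$, leaving only $j=N-1$. By symmetry, for $a\in\ker\ell_{\lambda}$ we have $\Omega_{\lambda,N}(a,b)=B_{T}(a,\ell_{\lambda}^{N-1}b)$.

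\textbf{The argument.} Suppose $a\in\mathscr{W}_{\lambda,N}$ lies in the radical. Pairing $a$ with all $b\in\mathscr{W}_{\lambda,1}$ gives $B_{T}(\ell_{\lambda}^{N-1}a,b)=0$ for every such $b$. Since $\ell_{\lambda}^{N-1}a\in\mathscr{W}_{\lambda,1}$, the base case forces $\ell_{\lambda}^{N-1}a=0$, so $a\in\mathscr{W}_{\lambda,N-1}$.

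Next consider $a\in\mathscr{W}_{\lambda,N-1}$ and an arbitrary $b\in\mathscr{W}_{\lambda,N}$. In $\Omega_{\lambda,N}(a,b)$ the term $j=N-1$ vanishes, because $\ell_{\lambda}^{N-1}a=0$. The remaining sum is
\[
\sum_{j=0}^{N-2}B_{T}\bigl(\ell_{\lambda}^{j}a,\ell_{\lambda}^{N-2-j}(\ell_{\lambda}b)\bigr)=\Omega_{\lambda,N-1}(a,\ell_{\lambda}b).
\]
Since $\ell_{\lambda}b$ ranges over all of $\mathscr{W}_{\lambda,N-1}$ by surjectivity, the induction hypothesis (nondegeneracy of $\Omega_{\lambda,N-1}$) gives $a=0$.

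\textbf{Main obstacle.} The expected difficulty is justifying surjectivity of $\ell_{\lambda}:\mathscr{W}_{\lambda,N}\to\mathscr{W}_{\lambda,N-1}$ cleanly. Variation of parameters on the interval $(0,\infty)$, where the equation is regular, settles it. Alternatively, one can avoid surjectivity by a dimension count: $\ker\ell_{\lambda}^{N}$ has dimension $2N$ because $\ell_{\lambda}^{N}$ is a regular ODE of order $2N$, and $\ker\ell_{\lambda}$ has dimension $2$, so the image of $\ell_{\lambda}$ on $\mathscr{W}_{\lambda,N}$ has dimension $2N-2=\dim\mathscr{W}_{\lambda,N-1}$.
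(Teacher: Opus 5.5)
Your proof is correct, but it takes a different route from the paper.

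The paper settles non-degeneracy in one step by exhibiting Darboux coordinates. It sets $p_j(a)=(\ell_{\lambda}^{j}a)(T)$ and $q_j(a)=(\ell_{\lambda}^{N-1-j}a)'(T)$, and uses existence and uniqueness for $\ell_{\lambda}^{N}a=0$ to assert that $a\mapsto(p_0,\dots,p_{N-1},q_0,\dots,q_{N-1})$ is an isomorphism $\mathscr{W}_{\lambda,N}\simeq\mathbb{C}^{2N}$. Substituting into the definition then gives $\Omega_{\lambda,N}(a,b)=\sum_k q_k(a)p_k(b)-\sum_j p_j(a)q_j(b)$, which is the standard symplectic matrix. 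This is shorter, but it leaves implicit why these $2N$ numbers are admissible initial data for an equation of order $2N$. One has to check that $(\ell_{\lambda}^{j}a)(T)$ and $(\ell_{\lambda}^{j}a)'(T)$ equal $(-1)^{j}T^{2j}a^{(2j)}(T)$ and $(-1)^{j}T^{2j}a^{(2j+1)}(T)$ plus lower-order derivatives, which is a triangular change of variables.

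Your induction needs less ODE input:
\begin{itemize}
\item the order-two Wronskian for the base case;
\item the facts $\ker(\ell_{\lambda}|_{\mathscr{W}_{\lambda,N}})=\mathscr{W}_{\lambda,1}$ and $\ell_{\lambda}(\mathscr{W}_{\lambda,N})=\mathscr{W}_{\lambda,N-1}$ (both of your justifications of the latter are fine);
\item the reductions $\Omega_{\lambda,N}(a,b)=B_T(\ell_{\lambda}^{N-1}a,b)$ for $b\in\mathscr{W}_{\lambda,1}$ and $\Omega_{\lambda,N}(a,b)=\Omega_{\lambda,N-1}(a,\ell_{\lambda}b)$ for $a\in\mathscr{W}_{\lambda,N-1}$, which you verify correctly.
\end{itemize}
It is coordinate-free and shows how $\Omega_{\lambda,N}$ interacts with the filtration. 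The same index bookkeeping gives $\mathscr{W}_{\lambda,k}\perp\mathscr{W}_{\lambda,N-k}$, so once non-degeneracy is known, a dimension count yields $\mathscr{W}_{\lambda,k}^{\perp}=\mathscr{W}_{\lambda,N-k}$.

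You should delete the hedged paragraph ``Compatibility with $\ell_{\lambda}$'', since it is never used. The identity written there does hold at fixed $T$. However, its two terms are not separately independent of $T$, because in general $\tilde b\notin\mathscr{W}_{\lambda,N-1}$.
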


\begin{proof}
For $a\in C^{\infty}(0,\infty)$, let 
\[
p_{j}(a)=(\ell_{\lambda}^{j}a)(T),\quad q_{j}(a)=(\ell_{\lambda}^{N-1-j}a)'(T),\quad j=0,\dots,N-1
\]
The existence and uniqueness theorem for initial value problems for
the ODEs, applied to $\ell_{\lambda}^{N}a=0$, implies that the map
\[
J_{T}:\mathscr{W}_{\lambda,N}\simeq\mathbb{C}^{2N},\quad a\mapsto\left(p_{0}(a),\dots,p_{N-1}(a),q_{0}(a),\dots,q_{N-1}(a)\right)
\]
is an isomorphism. In the coordinates on $\mathscr{W}_{\lambda,N}$
given by $J_{T}$, the form $\Omega_{\lambda,N}$ is given by the
matrix
\[
\begin{pmatrix}0 & -\mathbf{1}_{N}\\
\mathbf{1}_{N} & 0
\end{pmatrix},\quad\mathbf{1}_{N}\text{: identity matrix of size }N
\]
\end{proof}
\begin{prop}
Let $U,V\in\mathcal{A}(X)$ with $u:=CU$ and $v:=CV$. If
\[
L_{\lambda}^{N}U=L_{\lambda}^{N}V=0,
\]
then 
\[
\Omega_{\lambda,N}(u,v)=0
\]
\end{prop}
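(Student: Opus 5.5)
Apply the higher Green's identity of Lemma~\ref{lem:Green-identities}(b) with $m=N-1$. Since $L_{\lambda}^{N}U=L_{\lambda}^{N}V=0$, the left-hand side vanishes identically for every $T>1$, so
\[
0=\sum_{j=0}^{N-1}\mathcal{B}_{T}(L_{\lambda}^{j}U,L_{\lambda}^{N-1-j}V)\qquad\text{for all }T>1.
\]
Each $L_{\lambda}^{j}U$ and $L_{\lambda}^{N-1-j}V$ lies in $\mathcal{A}(X)$, since $\mathcal{A}(X)$ is stable under $\Delta$. We can therefore apply Lemma~\ref{lem:passing-to-the-const} to each pair $(L_{\lambda}^{j}U,L_{\lambda}^{N-1-j}V)$.

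The second step is to identify the constant terms. From $C(P(\Delta)F)=P(D)(CF)$ we get $C(L_{\lambda}^{j}U)=\ell_{\lambda}^{j}u$ and $C(L_{\lambda}^{N-1-j}V)=\ell_{\lambda}^{N-1-j}v$. Lemma~\ref{lem:passing-to-the-const} then gives
\[
0=\sum_{j=0}^{N-1}B_{T}(\ell_{\lambda}^{j}u,\ell_{\lambda}^{N-1-j}v)+\sum_{j=0}^{N-1}R_{T}(L_{\lambda}^{j}U,L_{\lambda}^{N-1-j}V)=\Omega_{\lambda,N;T}(u,v)+R_{T},
\]
where $R_{T}\to0$ as $T\to\infty$, being a finite sum of terms each tending to zero.

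The final step uses that $u,v\in\mathscr{W}_{\lambda,N}$, which holds because $\ell_{\lambda}^{N}u=C(L_{\lambda}^{N}U)=0$, and likewise for $v$. By Lemma~\ref{lem:indep-of-T}, $\Omega_{\lambda,N;T}(u,v)=\Omega_{\lambda,N}(u,v)$ is a constant independent of $T$. Hence $\Omega_{\lambda,N}(u,v)=-R_{T}$ for all $T$, and letting $T\to\infty$ yields $\Omega_{\lambda,N}(u,v)=0$.

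The only point needing care is matching the order of arguments and signs in $\mathcal{B}_{T}$ and $B_{T}$. As defined, $\mathcal{B}_{T}(U,V)$ has integrand $V\partial_yU-U\partial_yV$, and $B_{T}(a,b)=a'b-ab'$, so the two conventions agree. Any overall sign discrepancy would in any case be harmless, since we only conclude vanishing.

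A second point is that the truncated Green identity must be valid on $X^{T}$, with the side boundaries of $\mathscr{F}$ cancelling by $\Gamma$-invariance. This is already built into Lemma~\ref{lem:Green-identities}, so there is no genuine obstacle. The argument is essentially the assembly of the preceding lemmas.
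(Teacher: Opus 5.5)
Your proposal is correct and follows essentially the same route as the paper: the higher Green's identity with $m=N-1$, then Lemma~\ref{lem:passing-to-the-const} applied to each pair $(L_{\lambda}^{j}U,L_{\lambda}^{N-1-j}V)$, then Lemma~\ref{lem:indep-of-T} to let $T\to\infty$. You also make explicit two points the paper leaves tacit: that $\mathcal{A}(X)$ is stable under $L_{\lambda}$, and that $C(L_{\lambda}^{j}U)=\ell_{\lambda}^{j}u$, which is what places $u,v$ in $\mathscr{W}_{\lambda,N}$ and identifies the boundary sum with $\Omega_{\lambda,N;T}(u,v)$.
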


\begin{proof}
By Lemma \ref{lem:passing-to-the-const}, for each $j=0,1,\dots,N-1$
we have 
\[
\mathcal{B}_{T}(L_{\lambda}^{j}U,L_{\lambda}^{N-1-j}V)=B_{T}(\ell_{\lambda}^{j}u,\ell_{\lambda}^{N-1-j}v)+R_{T}(L_{\lambda}^{j}U,L_{\lambda}^{N-1-j}V),
\]
and 
\[
\lim_{T\to\infty}R_{T}(L_{\lambda}^{j}U,L_{\lambda}^{N-1-j}V)=0.
\]
Since $L_{\lambda}^{N}U=L_{\lambda}^{N}V=0$, the higher Green's identity
gives 
\[
0=\sum_{j=0}^{N-1}\mathcal{B}_{T}(L_{\lambda}^{j}U,L_{\lambda}^{N-1-j}V).
\]
Therefore, 
\[
0=\Omega_{\lambda,N;T}(u,v)+\sum_{j=0}^{N-1}R_{T}(L_{\lambda}^{j}U,L_{\lambda}^{N-1-j}V)
\]
Letting $T\to\infty$ and using Lemma \ref{lem:indep-of-T}, we obtain
\[
\Omega_{\lambda,N}(u,v)=0.
\]
\end{proof}
We summarize the proof of Franke's theorem. The above proposition
shows that $\mathscr{V}_{\lambda,N}$ is an isotropic subspace for
the non-degenerate symplectic form $\Omega_{\lambda,N}$ on $\mathscr{W}_{\lambda,N}$;
since $\dim\mathscr{W}_{\lambda,N}=2N$, any isotropic subspace has
dimension at most $N$. Combined with
\[
C\left(\mathcal{E}_{\lambda,N}\right)\subset\mathscr{V}_{\lambda,N}\quad\text{and}\quad\dim C\left(\mathcal{E}_{\lambda,N}\right)=N,
\]
and we obtain 
\[
\dim\mathscr{V}_{\lambda,N}=N\quad\text{and}\quad C\left(\mathcal{E}_{\lambda,N}\right)=\mathscr{V}_{\lambda,N}
\]
By the injectivity of $C$ on $\mathcal{A}_{0}(X)^{\perp}$, it follows
that 
\[
\mathcal{A}_{\lambda,N}^{\perp}=\mathcal{E}_{\lambda,N}
\]
for all $\lambda\in\mathbb{C}$ and $N\ge1$. Applying this to each
generalized eigenspace in the finite-dimensional $\mathbb{C}[\Delta]$-module
generated by $F\in\mathcal{A}_{0}(X)^{\perp}$ proves Franke's theorem. 

\subsection*{Remarks}
\begin{rem}
Franke's theorem, if proved in the above fashion in full generality,
would allow one to pass from the meromorphic continuation of cuspidal
Eisenstein series to general Eisenstein series, as remarked by Bernstein-Lapid
(\cite{Lapid-Perspectives-pub}, \S1). Since constant terms of cuspidal
Eisenstein series have a simpler structure than general Eisenstein
series, their meromorphic continuation is usually technically easier. 
\end{rem}

\begin{rem}
As a rough analogy, recall that for real reductive groups the classification
of irreducible \emph{admissible} representations is far better understood
than that of irreducible \emph{unitary} ones --- the unitary problem
being the deeper and harder of the two. A similar gap, in the author's
view, separates Franke's theorem from Langlands' contour deformation
method. In every case that is understood, the latter determines precisely
which Laurent coefficients of Eisenstein series yield square-integrable
automorphic forms, and hence generate unitary representations of the
adele group. For this reason Langlands' work is a far more serious
affair than Franke's theorem.
\end{rem}

\subsubsection*{Acknowledgements}

I wish to thank Dr. Paul Boisseau who pointed out the relevance of
\emph{Lagrange's identity }in the proof presented here. 

\bibliographystyle{plain}
\bibliography{references}

\end{document}